\theoremstyle{plain}
\newtheorem{theorem}{Theorem}
\newtheorem{proposition}{Proposition}[section]
\numberwithin{equation}{section}
\newtheorem{corollary}[proposition]{Corollary}
\newtheorem{problem}{Problem}
\newtheorem{lemma}{Lemma}[section]
\title{Geometric convexity of an operator mean}
\author{Shuhei Wada 
\thanks{E-mail: wada@j.kisarazu.ac.jp}
\\ 
Department of Information and Computer Engineering, \\ 
	National Institute of Technology, Kisarazu College
}
\date{}
\begin{document}
\maketitle
\begin{abstract}
Let $\sigma$ be an operator mean in the sense of Kubo and Ando. 
If the representation function $f_\sigma$ of $\sigma$ satisfies 
$$f_\sigma (t)^p\le f_\sigma(t^p) \text{ for all } p>1,$$
then $\sigma$ is called a pmi mean.
Our main interest is the class of pmi means (denoted by $PMI$).
To study $PMI$, 
the operator mean $\sigma$, wherein
$$f_\sigma(\sqrt{xy})\le \sqrt{f_\sigma (x)f_\sigma (y)}\quad (x,y>0)$$
is considered in this paper. The set of such means
(denoted by $GCV$) includes certain significant examples and is contained in $PMI$. 
The main result presented in this paper is that $GCV$ is a proper subset of $PMI$. 
In addition, we investigate certain operator-mean classes, which contain $PMI$. 
\end{abstract}

\section{Introduction}
Let ${\mathcal H}$ be a Hilbert space with an inner product $\langle \cdot ~|~ \cdot \rangle $. 
 A bounded linear operator $A$ on ${\mathcal H}$ is said to be positive (denoted by $A\ge 0$) if 
$\langle Ax~|~x\rangle \ge 0$ for all $x \in {\mathcal H}$.
We denote the set of positive operators on ${\mathcal H}$ by $B({\mathcal H})_+$.
If an operator $A\in B({\mathcal H})_+$ is invertible, we denote $A>0$. 

 A continuous real function $f$ from $(0,\infty)$
 is said to be operator monotone on $(0,\infty)$, if 
the inequality $A \ge B>0$ implies $f(A) \ge f(B)$. 

An operator monotone function $f$ on $(0, \infty)$ is called normal, if $f(1)=1$. 
In this paper, $OM_+^1$ denotes the set of normalized  
operator monotone functions on $(0,\infty)$ into itself. 

In \cite{K-A}, Kubo and Ando provide
the following axiom for operator means. 
A binary operation $\sigma$ among $B(\mathcal {H})_+$ 
is called an operator mean, if it satisfies the following: 
\begin{enumerate}
\item[{\rm (i)}] $A \le C, B\le D \Rightarrow A \sigma B \le C \sigma D,$
\item[{\rm (ii)}] $T^*(A \sigma B)T \le (T^*AT)\sigma (T^*BT),$
\item[{\rm (iii)}] $A_n \downarrow A,B_n \downarrow B \Rightarrow 
A_n\sigma B_n \downarrow A\sigma B$,
\item[{\rm (iv)}] $1\sigma 1 =1$.
\end{enumerate}
\noindent
If $f$ is in $OM_+^1$, then 
the binary operation $\sigma_{f}$ on $B(\mathcal {H})_+$ defined by 
$$A\sigma_{f} B =\lim_{\epsilon \downarrow 0}
A_{\epsilon}^{1 \over 2}
f(A_{\epsilon}^{-{1 \over 2}}B_{\epsilon}^{1 \over 2}
A_{\epsilon}^{-{1 \over 2}})
A_{\epsilon}^{1 \over 2}
$$ is an operator mean, 
where $A_{\epsilon}=A+\epsilon 1$ and $B_{\epsilon}=B+\epsilon 1$. 
Kubo and Ando show that 
the function $f \mapsto \sigma_f$ is an order isomorphism from $OM_+^1$ onto 
the set of operator means \cite{K-A}. In this paper, we call $\sigma_f$ an operator mean 
corresponding to $f$ and at times identify $\sigma_f$ as $f$. 

The following 
theorem is referred to as the Ando-Hiai inequality (\cite{A-H}). 
$A,B>0, A\#_\alpha B \ge I \Rightarrow A^p\#_\alpha B^p \ge I\quad (p\ge 1),$
where $\alpha\in [0,1]$ and $\#_\alpha$ is an operator mean corresponding to a power function 
$t\mapsto t^\alpha$. 
In \cite{wada}, it is shown that the generalized inequality
$$A,B>0, A\sigma_f B \ge I \Rightarrow A^p\sigma_f B^p \ge I\quad (p\ge 1)$$
holds if and only if $f$ is power monotone increasing (pmi for short), i.e., 
\begin{equation}\label{pmi}f(t)^p\le f(t^p)\quad (p\ge 1,t>0).
\end{equation}
Our main interest is the class of pmi means (denoted by $PMI$).
To study $PMI$, in this paper,
we consider an operator mean $\sigma_f$, wherein 
\begin{equation}\label{gcv}
f(\sqrt{xy})\le \sqrt{f(x)f(y)}\quad (x,y>0)
\end{equation}
holds. A positive valued function with (\ref{gcv}) is called geometrically convex or multiplicatively convex 
(\cite{B-H}, \cite{Niclescu}); hence, 
we denote the set of functions $f\in OM_+^1$ with (\ref{gcv}) 
by $GCV$.

In {\color{black}Section 3}, we present some of the basic properties of $GCV$ and 
its adjoint (denoted by $GCC$). From this argument, we conclude that 
several significant $PMI$ means are contained in $GCV$.

It is conjectured that $GCV$ is a proper subset of $PMI$, 
i.e., 
$$GCV\subsetneq PMI.$$
{\color{black}In Section 4, 
we characterize a pmi mean and a gcv mean by using 
Hansen's integral representaion of an operator mean \cite{hansen}.  
Using this, we prove the above conjecture, which is our main result. }

In Section 5, we consider $PMI_\infty$ defined by 
$$PMI_\infty:=\{f\in OM_+^1~|~f(t)\ge t^{f'(1)} \}$$
and prove that $PMI$ is a proper subset of this class. 

Combining the above arguments,
we finally obtain the following relationships:
Let $\sigma$ be an operator mean and $f_\sigma$ be the representation function of $\sigma$.
Consider the following statements:   

(I) $f_\sigma$ is geometrically convex. 

(II) $A,B>0, A\sigma B \ge I \Rightarrow A^p\sigma B^p \ge I\quad (p\ge 1).$

 (III) $\sigma\ge \#_\alpha$ for certain $\alpha\in [0,1]$. 
\hfill\break
Then 

(1) I implies II; II implies III, 

(2) III does not imply II; II does not imply I.

\section{Geodesic mean}
As per the theory of Kubo and Ando \cite{K-A}, the set $OM_+^1$ of 
normalized positive operator monotone functions on $(0,\infty)$ is 
identified with the set of operator means. Hence, the following classes  
$${\mathcal PMI}:=\{f\in OM_+^1~|~f(t)^r\le f(t^r) \quad (\forall r>1)\}$$
and 
$${\mathcal PMD}:=\{f\in OM_+^1~|~f(t)^r\ge f(t^r) \quad (\forall r>1)\}$$ 
can be viewed as subsets of the set of operator means. 
The function $f\in PMI$ (resp. $f\in PMD$)  
is referred to as a
pmi (resp. pmd) mean. 
As stated in \cite{wada}, 
for any probability measure $p$ on $[0,1]$, the function 
\begin{equation}\label{gm}
x\mapsto \int_0^1 x^\alpha dp(\alpha)
\end{equation} 
is in $PMI$. Such a function $f$ is called a
geodesic mean  
and the set of geodesic means is denoted by $GM$. 
Several examples of 
a pmi mean can be obtained by using the fact that $GM\subset PMI$. 

Although there are a number of functions belonging to PMI, 
it is not easy to show the pmi property (\ref{pmi}) of a certain operator mean 
because the verification of condition (\ref{pmi}) or (\ref{gm})
requires considerable calculation. Bourin and Hiai \cite{B-H} mention that 
a positive operator monotone function $f$ on $[0,\infty)$ belongs to $GM$ if and only if
${{d^n}\over {dt}^n} f(e^t)\ge 0$ 
for all $n\ge 0$. 
Thus we need to determine a technique 
for obtaining pmi means and evaluate this technique. 

\section{Geometrically convex mean}
\subsection{Definitions and basic properties}

A positive function $f$ on $(0,\infty)$ is called 
geometrically convex (resp. geometrically concave), if 
$$f(\sqrt{xy})\le \sqrt{f(x)f(y)} \quad (resp. \quad f(\sqrt{xy})\ge \sqrt{f(x)f(y)})$$
holds for all $x,y >0$. Let $gcv$ (resp. $gcc$) be the set of monotone increasing continuous functions that 
are geometrically convex (resp. \ geometrically concave ) on $(0,\infty)$. 
We also define $GCV$ (resp. $GCC$) as follows :
$$GCV:=\{f\in OM_+^1~|~ f \in gcv \}\quad (resp. \ \ GCC:=\{ f\in OM_+^1~|~  f \in gcc \}).$$

As stated in \cite{B-H}, the convexity of 
the function $t\mapsto \log f(e^t)$ is 
a necessary and sufficient condition for $f\in OM_+^1$ to be in $GCV$. 
 Using this, we have some inclusions among the subclasses of $OM_+^1$. 
The second inclusion in the following is proved in \cite{Fujii-Yamazaki}. 
\begin{proposition}\label{key_lemma} 
 $GM\subseteq GCV \subseteq PMI$. 
\end{proposition}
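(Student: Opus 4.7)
The plan is to establish each inclusion separately, leaning on the characterization (attributed to Bourin and Hiai in the text just above the statement) that $f \in OM_+^1$ lies in $GCV$ if and only if the function $g(t) := \log f(e^t)$ is convex on $\mathbb{R}$. This translates the multiplicative geometric-convexity inequality into an ordinary additive convexity statement, and anchors everything to the condition $g(0) = 0$ coming from $f(1) = 1$.

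For the first inclusion $GM \subseteq GCV$, I would take $f(x) = \int_0^1 x^\alpha \, dp(\alpha)$ with $p$ a probability measure on $[0,1]$ and apply the Cauchy--Schwarz inequality in $L^2(dp)$ to the pair $\alpha \mapsto x^{\alpha/2}$ and $\alpha \mapsto y^{\alpha/2}$. This gives
$$
f(\sqrt{xy}) = \int_0^1 x^{\alpha/2} y^{\alpha/2}\, dp(\alpha) \le \left(\int_0^1 x^\alpha dp(\alpha)\right)^{1/2}\!\left(\int_0^1 y^\alpha dp(\alpha)\right)^{1/2} = \sqrt{f(x)f(y)},
$$
which is the defining inequality of $GCV$. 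No extra regularity is needed beyond what the definition of $GM$ already provides.

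For the second inclusion $GCV \subseteq PMI$, I would use the Bourin--Hiai characterization to replace the hypothesis by convexity of $g$, together with $g(0) = 0$. The pmi inequality $f(t)^p \le f(t^p)$ for $p \ge 1$ becomes, after substituting $t = e^s$ and taking logarithms, the requirement $p\,g(s) \le g(ps)$ for every $s \in \mathbb{R}$ and $p \ge 1$. But $s = \tfrac{1}{p}(ps) + \bigl(1-\tfrac{1}{p}\bigr)\cdot 0$ is a convex combination (since $1/p \in (0,1]$), so convexity of $g$ together with $g(0)=0$ yields
$$
g(s) \le \tfrac{1}{p}\, g(ps) + \bigl(1-\tfrac{1}{p}\bigr) g(0) = \tfrac{1}{p}\, g(ps),
$$
which rearranges to $p\,g(s) \le g(ps)$, as required.

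Neither step is genuinely difficult once the Bourin--Hiai criterion is in hand; the content is entirely in that reformulation. If I had to flag a delicate point it would be simply making sure one applies the convexity of $g$ globally on $\mathbb{R}$ (not only on $s \ge 0$), since the pmi condition must be checked at every $t > 0$, including $t < 1$; but this is automatic because the convex combination expressing $s$ in terms of $ps$ and $0$ uses coefficients in $[0,1]$ for \emph{every} real $s$ when $p \ge 1$.
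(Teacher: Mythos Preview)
Your argument is correct. The second inclusion $GCV\subseteq PMI$ is proved exactly as the paper does it: convexity of $t\mapsto \log f(e^t)$ together with the normalization $g(0)=0$ gives $g(\alpha s)\le \alpha g(s)$ for $\alpha\in[0,1]$, which is the pmi inequality in logarithmic coordinates.

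For the first inclusion $GM\subseteq GCV$ your route is genuinely different from the paper's. The paper first proves a structural lemma showing that $gcv$ is closed under products, powers, and more generally under any operator-mean combination $f\sigma_h g$ with $h\in GCV$; from the special case $h=(1-\lambda)+\lambda t$ it deduces that $GCV$ is a convex set, and since it contains each power function $x^\alpha$, it therefore contains every geodesic mean. Your Cauchy--Schwarz computation bypasses all of this: it verifies the geometric-convexity inequality for $\int_0^1 x^\alpha\,dp(\alpha)$ directly and in one line, and it handles arbitrary probability measures $p$ without any approximation step. What you lose is the broader structural information (closure of $GCV$ under operator means and convex combinations), which the paper uses elsewhere; what you gain is a shorter, self-contained proof of the inclusion itself that sidesteps the passage from finite convex combinations to general integrals.
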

\begin{lemma}\label{basic}
Let $f$ and $g$ be in $gcv$ and let $h\in GCV$. 
Then we have the following. 
\begin{itemize}
\item[(1)] $f\cdot g$ and $f^\alpha$ are in $gcv$ for all $\alpha >0$;
\item[(2)] The function $t\mapsto (f \sigma_h g )(t)\left(:=f (t)\cdot h\left({{g(t)} \over {f(t)}} \right)\right)$ is in $gcv$; 
\item[(3)] If $f$ is bijective, the inverse function of $f$ is in $gcc$.
\end{itemize}
\end{lemma}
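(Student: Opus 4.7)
The plan is to use the equivalent characterization stated just before the lemma: $f\in gcv$ if and only if $t\mapsto \log f(e^t)$ is convex on $\mathbb{R}$. For part (1), I simply note that $\log((fg)(e^t))=\log f(e^t)+\log g(e^t)$ is a sum of convex functions, and $\log f^\alpha(e^t)=\alpha \log f(e^t)$ is a positive scalar multiple of a convex function; both conclusions are immediate.

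For part (2), the tempting move of setting $u(t):=\log(g(e^t)/f(e^t))$ and composing with the convex increasing function $s\mapsto \log h(e^s)$ breaks down, because $u$ is only a difference of convex functions and need not itself be convex. My plan is instead to lift the problem to the two-variable perspective function $F_h(a,b):=a\,h(b/a)$. The key step is the observation that $h\in GCV$ is equivalent to joint geometric convexity of $F_h$ on $(0,\infty)^2$,
\[
F_h(\sqrt{a_1a_2},\sqrt{b_1b_2})\le \sqrt{F_h(a_1,b_1)\,F_h(a_2,b_2)},
\]
which follows by a short direct computation from (\ref{gcv}) applied to $h$. Since $h\in OM_+^1$ also forces $F_h$ to be nondecreasing in each argument, applying first the geometric convexity of $f$ and $g$ inside $F_h$ and then the joint geometric convexity of $F_h$ itself gives
\[
F_h(f(\sqrt{xy}),g(\sqrt{xy}))\le F_h(\sqrt{f(x)f(y)},\sqrt{g(x)g(y)})\le \sqrt{F_h(f(x),g(x))\,F_h(f(y),g(y))},
\]
which is precisely the geometric convexity of $f\sigma_h g$.

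For part (3), the plan is a direct substitution: with $u=f(x)$ and $v=f(y)$, apply the monotone increasing function $f^{-1}$ to the inequality $f(\sqrt{xy})\le \sqrt{f(x)f(y)}=\sqrt{uv}$ to obtain $\sqrt{f^{-1}(u)\,f^{-1}(v)}=\sqrt{xy}\le f^{-1}(\sqrt{uv})$, which is the defining inequality of $gcc$.

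The main obstacle I anticipate is part (2): the quick \emph{take logs in a single variable} approach fails, and the conceptual step is to recognize that $h\in GCV$ is really a two-variable statement about the perspective $F_h$ rather than a one-variable statement about $h$ alone. Once that reformulation is in hand, the monotonicity of $F_h$ and the $gcv$-property of $f,g$ combine cleanly to close part (2), while parts (1) and (3) are essentially one-line consequences of the log-convexity characterization.
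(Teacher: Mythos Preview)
Your proposal is correct and follows essentially the same route as the paper. Part (1) the paper simply declares immediate; part (3) is identical to the paper's substitution argument. For part (2), your ``perspective function'' $F_h(a,b)=a\,h(b/a)$ is exactly the scalar form of $a\,\sigma_h\,b$, and your two-step chain (monotonicity of $F_h$ applied to the $gcv$ inequalities for $f,g$, then joint geometric convexity of $F_h$ coming from $h\in GCV$) is precisely the paper's displayed chain of inequalities, just with the intermediate object given a name; the underlying computation is line-for-line the same.
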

\begin{proof}
The geometric convexity of $f\cdot g$ and $f^\alpha$ is immediate. 

From the definition of $GCV$, inequalities 
\begin{align*}
(f\sigma_h g)(\sqrt{xy}) 
&= f(\sqrt{xy}) \sigma_h g(\sqrt{xy}) \\
&\le \sqrt{f(x)f(y)} \sigma_h \sqrt{g(x)g(y)} \\
&=\sqrt{f(x)f(y)} h\left( \sqrt{ \left({{g(x)}\over {f(x)}}\right) \left({{g(y)}\over {f(y)}}\right) } \right) \\
&\le \sqrt{f(x)f(y)} 
\sqrt{
h \left({{g(x)}\over {f(x)}}\right) 
h \left({{g(y)}\over {f(y)}}\right) } \\
&=\sqrt{
(f\sigma_h g)(x)\ (f\sigma_h g)(y) 
}
\end{align*}
hold for all $x,y>0$. This implies (2).

Let $f^{(-1)}$ be the inverse function of $f$. Then for every $x,y>0$, 
$$f\left( \sqrt{f^{(-1)}(x) f^{(-1)}(y)}\right) \le
\sqrt{xy}, 
$$ signifying that $f^{(-1)}$ is geometrically concave.
\end{proof}
Using (2) of the above lemma, for $f,g \in gcv$, 
the weighted arithmetic mean of $f$ and $g$ is in $gcv$, which signifies that 
$gcv$ is a convex set. This implies that $GCV$ is a convex set. 
\begin{proof}[Proof of Proposition \ref{key_lemma}]
From the above lemma, $GCV$ is a convex set and 
has a power function $x^\alpha$ $(0\le \alpha \le 1)$, which implies 
the first inclusion $GM\subseteq GCV$. 

Next, we prove the second inclusion. As stated above, $f\in GCV$ if and only if function 
$t\mapsto F(t):=\log f(e^t)$ is convex on $(-\infty,\infty)$. Thus 
$$F((1-\alpha)t + \alpha s)\le (1-\alpha)F(t) +\alpha F(s)$$
holds for all $\alpha\in [0,1]$. Considering $t=0$, $F(\alpha s)\le \alpha F(s)$, 
which implies that $f(x^\alpha)\le f(x)^\alpha$ for all $x>0$. 
\end{proof}

\hfill\break

Recall that $f(t) \mapsto f^*(t)(:={1\over {f(1/t)}})$ is an idempotent mapping on $OM_+^1$   
and $PMD=PMI^*\left(:=\{f^*~|~f\in PMII\}\right)$. The following is obtained. 
\begin{corollary}
$$GM^*\subseteq GCC\subseteq PMD.$$
\end{corollary}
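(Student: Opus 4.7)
The plan is to apply the involution $f \mapsto f^*$ to the inclusion chain $GM \subseteq GCV \subseteq PMI$ established in Proposition \ref{key_lemma}. Because $*$ is an idempotent (involutive) bijection on $OM_+^1$, it sends subsets to subsets while preserving inclusions, so it suffices to identify $GCV^*$ and $PMI^*$ as $GCC$ and $PMD$ respectively; the latter is already recorded in the paragraph preceding the corollary, so the only real content is the identification $GCV^* = GCC$.

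To prove $GCV^* = GCC$, I would argue directly from the defining inequality. Given $f \in GCV$ and $x,y > 0$, use the identity $1/\sqrt{xy} = \sqrt{(1/x)(1/y)}$ together with geometric convexity of $f$ to obtain
\begin{equation*}
f\!\left(\tfrac{1}{\sqrt{xy}}\right) = f\!\left(\sqrt{\tfrac{1}{x}\cdot\tfrac{1}{y}}\right) \le \sqrt{f(1/x)\,f(1/y)}.
\end{equation*}
Taking reciprocals reverses the inequality and yields
\begin{equation*}
f^*(\sqrt{xy}) = \frac{1}{f(1/\sqrt{xy})} \ge \frac{1}{\sqrt{f(1/x)f(1/y)}} = \sqrt{f^*(x)\,f^*(y)},
\end{equation*}
so $f^* \in GCC$. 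The converse is identical since $*$ is an involution, giving $f \in GCC$ iff $f^* \in GCV$, and hence $GCV^* = GCC$.

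With this in hand, the rest is bookkeeping. Apply $*$ termwise to the inclusion chain of Proposition \ref{key_lemma}: since $A \subseteq B$ implies $A^* \subseteq B^*$ for any subsets $A,B \subseteq OM_+^1$, we obtain $GM^* \subseteq GCV^* \subseteq PMI^*$. Substituting $GCV^* = GCC$ and the already-noted $PMI^* = PMD$ gives exactly $GM^* \subseteq GCC \subseteq PMD$.

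There is no serious obstacle here; the only step that requires any verification is the reciprocal computation for $GCV^* = GCC$, and that is essentially a one-line manipulation. One small thing to keep an eye on is confirming that $f^*$ indeed lies in $OM_+^1$ when $f$ does (so that the inclusions land inside $OM_+^1$ as required), but this is part of the well-known fact that $*$ is an involution on $OM_+^1$ and needs no new argument.
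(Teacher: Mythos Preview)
Your proposal is correct and follows essentially the same route as the paper: apply the involution $*$ to the chain $GM\subseteq GCV\subseteq PMI$ of Proposition~\ref{key_lemma} and identify $GCV^*=GCC$, $PMI^*=PMD$. The only difference is that you spell out the verification of $GCV^*=GCC$ via the one-line reciprocal computation, whereas the paper simply asserts this identity inside the proof.
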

\begin{proof}
From the preceding result, $$GM^*\subseteq GCV^*=GCC\subseteq PMI^*=PMD.$$
\end{proof}
{\Remark From the above lemma, the set $gcv$ is closed under the sum,  i.e.,  
$f_1,f_2\in gcv \Rightarrow f_1+f_2\in gcv. $
However, the same does not hold for $gcc$. For example, 
${{2t}\over {t+1}}$ and $t^2$ are in gcc and 
${{2t}\over {t+1}}+t^2$ is not in gcc.
}

\hfill\break

Before closing this section, we note that 
there are some counterexamples for $GM=GCV$.

{\example\label{b_p} (\cite{B-H})
Let $p\in [-1,1]$ and $b_p(t):=\left( {t^p+1}\over 2\right)^{1/p}$.
Then $b_p \in GCV\backslash GM$ if and only if 
$p\in (0,1)\backslash \{{1\over n} ~|~n\in {\mathbb N}\}$.
}
{\example   (\cite{B-H})
Let $\alpha\in [-1,2]$ and $u_\alpha(t):={{\alpha-1}\over \alpha} 
{{t^\alpha-1}\over {t^{\alpha-1}-1}}$. Then 
$u_\alpha \in GCV\backslash GM$ if and only if  
$\alpha\in [1/2,2]\backslash \{1, {{m+1}\over m} , {m\over {m+1}}~|~
m\in {\mathbb N}\}$.
}

\subsection{Functions in $GCV$}
In this section, we present a few examples of a function in $GCV\left( \subseteq PMI \right)$. 
We first consider the function $u_\alpha$ defined in the previous section. 
The geometric convexity of $u_\alpha$ is characterized as follows \cite{B-H} :
$$u_\alpha\in GCV\ 
\text{(resp.}\ u_\alpha\in GCC\text{)} \iff 1/2\le \alpha\le 2\ 
\text{(resp.} -1\le \alpha\le 1/2\text{)}.$$

The  function $u_\alpha$ is generalized as $u_{a,b}$ defined by
$$u_{a,b}(t):={{b}\over a}{{t^a-1}\over {t^b-1}}\quad (a,b \in [-2,2],(a,b)\not=(0,0)),$$
where ${{t^a-1}\over a}$ is defined as $\log t$, when $a=0$.
In \cite[Example 3.4(1)]{nagisa-wada}, 
it is proved that $u_{a,b}\in OM_+^1$ if and only if $(a,b)$ is in $\Gamma$, 
where 
\begin{align*}
\Gamma:=&\{ (a,b) ~|~ 0<a-b\le 1, 2\ge a\ge -1, -2\le b\le 1\} \\
&\cup \left( [0,1]\times [-1,0]\right)\backslash \{(0,0)\} \\
&\cup \{(a,a)~|~a\not=0 \}.
\end{align*}
\begin{proposition}\label{ex_power}
$u_{a,b} \in GCV$ 
if and only if 
$(a,b)\in \Gamma$ and $|a|\ge |b|$.
\end{proposition}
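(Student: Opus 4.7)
The plan is to reduce the geometric convexity of $u_{a,b}$ to a one-variable monotonicity fact. By Example 3.4(1) of \cite{nagisa-wada}, membership of $u_{a,b}$ in $OM_+^1$ is equivalent to $(a,b)\in\Gamma$, so the substance of the claim is that, within $\Gamma$, geometric convexity is equivalent to $|a|\ge|b|$.

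As recalled in Section 3, $u_{a,b}\in GCV$ iff $g(t):=\log u_{a,b}(e^t)$ is convex on $\mathbb{R}$. Writing
$$u_{a,b}(e^t)=\frac{(e^{at}-1)/a}{(e^{bt}-1)/b}$$
(with the convention $(e^{0\cdot t}-1)/0:=t$) and setting $\psi_c(t):=(e^{ct}-1)/c$, a direct calculation gives
$$g'(t)=\frac{\psi_a'(t)}{\psi_a(t)}-\frac{\psi_b'(t)}{\psi_b(t)}=\frac{a}{1-e^{-at}}-\frac{b}{1-e^{-bt}},$$
and, using the identity $(1-e^{-ct})^2=4e^{-ct}\sinh^2(ct/2)$, a further differentiation yields
$$g''(t)=\frac{b^2}{4\sinh^2(bt/2)}-\frac{a^2}{4\sinh^2(at/2)}.$$

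The key reformulation is to introduce the positive even function $h(x):=x/\sinh x$ (with $h(0):=1$). Since $a^2/\sinh^2(at/2)=(2/t)^2\,h(at/2)^2$ for $t\ne 0$, the convexity condition $g''(t)\ge 0$ amounts to
$$h(at/2)^2\le h(bt/2)^2\qquad\text{for all }t\ne 0.$$
I would then verify that $h$ is strictly decreasing on $[0,\infty)$: its derivative $h'(x)=(\sinh x-x\cosh x)/\sinh^2 x$ has a numerator that vanishes at $x=0$ and whose own derivative is $-x\sinh x<0$ for $x>0$. Combined with the evenness and positivity of $h$, the displayed inequality reduces to $|at/2|\ge|bt/2|$, which holds for every $t\ne 0$ iff $|a|\ge|b|$. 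Together with $(a,b)\in\Gamma$, this establishes the claimed characterization.

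The calculation itself is routine; the only bookkeeping to watch is the degenerate strata $a=0$ or $b=0$ and the basepoint $t=0$. At $a=0$ the expression $a^2/\sinh^2(at/2)$ is read as its limit $4/t^2$, which matches the convention $\psi_0(t)=t$ and keeps the reformulation via $h$ valid. At $t=0$ the function $g$ is smooth with $g(0)=0$, so convexity on $\mathbb{R}\setminus\{0\}$ extends by continuity. Once these conventions are in place, the main obstacle—identifying the correct monotone quantity hidden in $g''$—dissolves into the elementary inequality $|a|\ge|b|$.
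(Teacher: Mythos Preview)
Your argument is correct and follows essentially the same route as the paper: both compute the second derivative of $\log u_{a,b}(e^t)$ and reduce the sign condition to the monotonicity of (what amounts to) the single even function $y\mapsto y^2/\sinh^2(y/2)$. Your packaging via $h(x)=x/\sinh x$ and your uniform treatment of the degenerate cases $a=0$ or $b=0$ as limits is slightly more streamlined than the paper's explicit case split, but the underlying idea is identical.
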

\begin{proof}
We first consider the case, where $ab=0$. 
If $a=0$ and $b\not=0$, then 
$u_{a,b}(t)= {b\over {t^b-1}} \log t$ and 
$$ {{d^2}\over{dx^2}}\log u_{a,b} (e^x)
=
{-1\over {x^2}}+b^2 ( e^{bx} + e^{-bx} -2 )^{-1} < 0
$$
for $x\not=0$. Thus we obtain 
$$ {{d^2}\over{dx^2}} \log u_{b,a}(e^x) 
=
- {{d^2}\over{dx^2}} \log u_{a,b}(e^x) \ge 0,$$
which implies the desired result. 

We next consider the case, where $a\ne 0, b\ne 0$. Then there exists 
$\alpha\in {\mathbb R}$ such that 
$$u_{a,b}(t)={{|b|} \over {|a|}}{{t^{|a|}-1}\over {t^{|b|} -1}}t^\alpha$$
and 
$${{d^2}\over{dx^2}}\log u_{a,b} (e^x)= {{(|a|x)^2 \psi(|a| x)-(|b|x)^2\psi(|b| x)}\over {x^2}},$$
where $\psi (x) =-( e^x + e^{-x} -2 )^{-1}$.
The function $\Psi (y) :=y^2\psi(y)$ is a negative valued function on $(-\infty,\infty)\backslash\{0\}$ 
and 
$$\Psi(-y)=\Psi(y),\quad \Psi(x) < \Psi(y)$$
for $0<x<y$. Thus 
$${{d^2}\over{dx^2}}\log u_{a,b} (e^x) \ge 0 \iff
\Psi(|a|x) \ge \Psi(|b|x) \text{ for all }x\not=0
\iff 
 |a|\ge |b|.$$
\end{proof}

From 
$${{d^2}\over{dx^2}}\log u_{a,b}^*(e^x)=
{{d^2}\over{dx^2}}\log u_{b,a}(e^x)=
- {{d^2}\over{dx^2}}\log u_{a,b}^*(e^x)
,$$
the following is obtained.
\begin{corollary}
$u_{a,b} \in GCC$ 
if and only if 
$(a,b)\in \Gamma$ and $|a|\le |b|$.
\end{corollary}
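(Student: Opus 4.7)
The plan is to derive the corollary from Proposition~\ref{ex_power} by invoking the adjoint involution $f\mapsto f^*$, which satisfies $GCC=GCV^*$ as noted earlier in the section. The task then reduces to understanding the geometric convexity of $u_{a,b}^*$ and verifying that the membership condition $(a,b)\in\Gamma$ is transported correctly by the adjoint.

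First I would compute $u_{a,b}^*$ explicitly. Starting from $u_{a,b}(1/t)=(b/a)(t^{-a}-1)/(t^{-b}-1)$, multiplying the numerator by $t^a$ and the denominator by $t^b$ (and tracking the compensating factor $t^{b-a}$) gives $u_{a,b}(1/t)=u_{a,b}(t)\cdot t^{b-a}$; taking reciprocals then yields
$$u_{a,b}^*(t)=t^{a-b}\,u_{b,a}(t).$$
Setting $t=e^x$ and taking logarithms turns the factor $t^{a-b}$ into the linear term $(a-b)x$, which is annihilated by $d^2/dx^2$. Hence
$$\frac{d^2}{dx^2}\log u_{a,b}^*(e^x)=\frac{d^2}{dx^2}\log u_{b,a}(e^x).$$

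Next, applying the formula derived in the proof of Proposition~\ref{ex_power} but with the roles of $a$ and $b$ interchanged, the right-hand side equals $|b|^2\psi(|b|x)-|a|^2\psi(|a|x)$, which is exactly $-\frac{d^2}{dx^2}\log u_{a,b}(e^x)$; this reproduces the displayed chain of equalities that immediately precedes the corollary. Since $\Psi(y)=y^2\psi(y)$ is monotone in $|y|$ (verified in that same proof), the geometric convexity of $u_{a,b}^*$ is equivalent to $|b|\ge|a|$. Combined with $u_{a,b}\in OM_+^1\iff u_{a,b}^*\in OM_+^1\iff (a,b)\in\Gamma$ (the adjoint being an involution on $OM_+^1$), the equivalence $u_{a,b}\in GCC\iff u_{a,b}^*\in GCV$ delivers the corollary.

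The only foreseeable obstacle is the degenerate case $ab=0$, where the closed form for $u_{a,b}$ (and hence for $u_{a,b}^*$) involves $\log t$ rather than a pure power of $t$. The split is exactly parallel to the one carried out in the proof of Proposition~\ref{ex_power}, and the log-linear correction arising from the adjoint still has vanishing second derivative, so no new ideas are needed—only a one-line check that the sign of $\frac{d^2}{dx^2}\log u_{a,b}(e^x)$ again flips upon swapping $a$ and $b$.
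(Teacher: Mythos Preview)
Your proposal is correct and follows essentially the same route as the paper: the paper's entire proof consists of the displayed chain
\[
\frac{d^2}{dx^2}\log u_{a,b}^*(e^x)=\frac{d^2}{dx^2}\log u_{b,a}(e^x)=-\frac{d^2}{dx^2}\log u_{a,b}(e^x),
\]
which you derive explicitly (via $u_{a,b}^*(t)=t^{a-b}u_{b,a}(t)$) and then combine with the second-derivative criterion from the proof of Proposition~\ref{ex_power} together with $GCC=GCV^*$. Your write-up is somewhat more detailed---you spell out the computation of $u_{a,b}^*$, the $\Gamma$-membership under the adjoint, and the $ab=0$ case---but the argument is the same.
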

From the above results, we have 
$u_{a,b}\in GCV \cup GCC$ for all $(a,b)\in \Gamma$, which 
{\color{black}implies} a condition for $u_{a,b}$ to be in $PMI$. 

\begin{corollary} Let $(a,b)\in \Gamma$.  Then 
the following are equivalent: 
\begin{itemize}
\item[(1)] $|a|\ge |b|$\ (resp. $|a|\le |b|$); 
\item[(2)] $u_{a,b} \in GCV$\ (resp. $u_{a,b} \in GCC$     );
\item[(3)] $u_{a,b} \in PMI$\ (resp.  $u_{a,b} \in PMD$     ).
\end{itemize}
\end{corollary}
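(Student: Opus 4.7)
The plan is to establish the cycle $(1)\Rightarrow(2)\Rightarrow(3)\Rightarrow(1)$. The equivalence $(1)\iff(2)$ is already delivered by Proposition \ref{ex_power} together with the Corollary immediately preceding this statement (which handles the $GCC$ case). The implication $(2)\Rightarrow(3)$ is immediate: Proposition \ref{key_lemma} gives $GCV\subseteq PMI$, and its adjoint furnishes $GCC\subseteq PMD$. So the only new content is $(3)\Rightarrow(1)$.

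For $(3)\Rightarrow(1)$ I would argue by contradiction. Suppose $u_{a,b}\in PMI$ for some $(a,b)\in\Gamma$ with $|a|<|b|$. Since in particular $|a|\le|b|$, the preceding Corollary forces $u_{a,b}\in GCC\subseteq PMD$. Thus $u_{a,b}$ lies simultaneously in $PMI$ and $PMD$, which yields the two-sided functional equation
$$u_{a,b}(t)^p = u_{a,b}(t^p) \qquad (p\ge 1,\ t>0).$$
Setting $g(s):=\log u_{a,b}(e^s)$, this becomes the multiplicative identity $g(ps)=p\,g(s)$ valid for all $p\ge 1$ and $s\in\mathbb R$. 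Specializing to $s=\pm 1$, and also to $p=1/|s|$ for $0<|s|<1$, shows that $g$ is linear on each half-line $(0,\infty)$ and $(-\infty,0)$. The analyticity of operator monotone functions then matches the two slopes at $s=0$, so $g(s)=cs$ for a single constant $c$, i.e., $u_{a,b}(t)=t^c$.

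It remains to observe that $u_{a,b}$ is a power function only when $|a|=|b|$. Rewriting $(b/a)(t^a-1)=t^c(t^b-1)$ as $(b/a)t^a-(b/a)=t^{b+c}-t^c$ and matching the sets of exponents appearing on the two sides leaves only two non-degenerate solutions: $a=b$ (forcing $c=0$, so $u_{a,b}\equiv 1$) or $a=-b$ (forcing $c=a$). In either case $|a|=|b|$, contradicting $|a|<|b|$. The resp.\ direction $u_{a,b}\in PMD\Rightarrow|a|\le|b|$ is entirely symmetric.

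The subtlest point will be extending the one-sided functional equation $g(ps)=p\,g(s)$, valid only for $p\ge 1$, to full linearity of $g$ on all of $\mathbb R$; without the analyticity of operator monotone functions one can only directly extract piecewise linearity on $(0,\infty)$ and $(-\infty,0)$ with possibly different slopes. Once that is in hand, the monomial matching and the case analysis inside $\Gamma$ are routine.
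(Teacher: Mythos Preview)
Your argument is correct and tracks the paper's implicit reasoning: the paper offers no proof beyond the one-line observation preceding the corollary that $u_{a,b}\in GCV\cup GCC$ for every $(a,b)\in\Gamma$, and you unpack precisely that, combining Proposition~\ref{ex_power} (and its $GCC$ corollary) with the inclusion $GCV\subseteq PMI$ of Proposition~\ref{key_lemma}.

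Your treatment of $(3)\Rightarrow(1)$ via $PMI\cap PMD$, the functional equation $g(ps)=pg(s)$, analyticity, and exponent matching is sound (the boundary case $a=0$ deserves a separate word, since the identity $(b/a)t^a-(b/a)=t^{b+c}-t^c$ is not literally available there), but heavier than necessary. The computation inside the proof of Proposition~\ref{ex_power} already shows that $\tfrac{d^2}{dx^2}\log u_{a,b}(e^x)<0$ \emph{strictly} for all $x\neq 0$ whenever $|a|<|b|$. Strict concavity of $F(x):=\log u_{a,b}(e^x)$ together with $F(0)=0$ then gives $F(x)=F\bigl(\tfrac{1}{r}(rx)+(1-\tfrac{1}{r})\cdot 0\bigr)>\tfrac{1}{r}F(rx)$ for $r>1$ and $x\neq 0$, i.e.\ $u_{a,b}(t)^r>u_{a,b}(t^r)$, so $u_{a,b}\notin PMI$ directly, with no functional-equation, analyticity, or exponent-matching step required.
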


\hfill\break

The Stolarsky mean is defined as
$$S_\alpha(s,t): =\left({{s^\alpha-t^\alpha} \over {\alpha(s-t)}}\right)^{1\over {\alpha-1}}\text{for }\alpha \in [-2,2]\backslash \{0,1\},$$
$S_0(s,t):=\lim_{\alpha\rightarrow 0}S_\alpha(s,t)$ and 
$S_1(s,t):=\lim_{\alpha\rightarrow 1}S_\alpha (s,t)$. 
It is known that $S_\alpha(1,t)$ is operator monotone, if $-2\le \alpha \le 2$ \cite{Naka}. 
Using $$\log S_\alpha(1,e^x)={1\over {\alpha-1}}\log u_{\alpha,1}(e^x)
,$$
we have a condition for $S_\alpha(1,t)$ to be in $GCV$. 
\begin{corollary}
$S_\alpha(1,t) \in GCV$ \ (resp. $S_\alpha (1,t)\in GCC$) if and only if  $\alpha \in [-1,2]$ 
(resp. $\alpha\in [-2,-1]$).
\end{corollary}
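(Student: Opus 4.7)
The plan is to reduce geometric convexity (resp.\ concavity) of $S_\alpha(1,t)$ to the sign computation already carried out inside the proof of Proposition~\ref{ex_power}. A function $f\in OM_+^1$ lies in $GCV$ (resp.\ $GCC$) exactly when $x\mapsto \log f(e^x)$ is convex (resp.\ concave) on $\mathbb{R}$, so combining this characterization with the stated identity
$$\log S_\alpha(1,e^x)=\frac{1}{\alpha-1}\log u_{\alpha,1}(e^x),$$
the sign of $\frac{d^2}{dx^2}\log S_\alpha(1,e^x)$ is the product of the sign of $1/(\alpha-1)$ and the sign of $\frac{d^2}{dx^2}\log u_{\alpha,1}(e^x)$.

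The second derivative on the right was analyzed inside the proof of Proposition~\ref{ex_power}: for any real $\alpha\ne 0$ it is nonnegative exactly when $|\alpha|\ge 1$ and nonpositive exactly when $|\alpha|\le 1$, the criterion coming from the fact that $\Psi(y)=y^2\psi(y)$ is even, negative and strictly decreasing in $|y|$; the $\alpha=0$ instance is handled separately as in that proof. Crucially, this sign analysis is a purely pointwise statement about $u_{\alpha,1}$ and does not rely on $(\alpha,1)\in\Gamma$, so it applies throughout the entire operator monotone range $\alpha\in[-2,2]$ of $S_\alpha(1,t)$.

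The rest is bookkeeping on the sign of $\alpha-1$. For $\alpha>1$ the prefactor $1/(\alpha-1)$ is positive, so $S_\alpha(1,t)\in GCV$ precisely when $|\alpha|\ge 1$, yielding $\alpha\in(1,2]$, while $GCC$ would require $|\alpha|\le 1$, which is impossible. For $\alpha<1$ the prefactor flips sign, so $S_\alpha(1,t)\in GCV$ precisely when $|\alpha|\le 1$, yielding $\alpha\in[-1,1)$, and $S_\alpha(1,t)\in GCC$ precisely when $|\alpha|\ge 1$, yielding $\alpha\in[-2,-1]$. The endpoint $\alpha=1$, at which $1/(\alpha-1)$ is singular, is absorbed by continuity of $\alpha\mapsto S_\alpha(1,t)$ together with the fact that convex functions form a closed set under pointwise limits. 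Taking the union of cases gives exactly $\alpha\in[-1,2]$ for $GCV$ and $\alpha\in[-2,-1]$ for $GCC$.

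The principal (and only) obstacle is routine bookkeeping: splitting the two sign regimes, treating $\alpha=1$ via a limiting argument, and checking that the $|a|\ge|b|$ versus $|a|\le|b|$ criterion from Proposition~\ref{ex_power} remains in force for the boundary pair $(\alpha,1)$ even when it lies outside $\Gamma$. Nothing substantive is needed beyond what already appears in the proof of the previous proposition, so the corollary is essentially a direct specialization of it.
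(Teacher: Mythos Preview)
Your proposal is correct and follows essentially the same approach as the paper: both use the identity $\log S_\alpha(1,e^x)=\frac{1}{\alpha-1}\log u_{\alpha,1}(e^x)$ and reduce the question to the sign analysis of $\frac{d^2}{dx^2}\log u_{a,b}(e^x)$ already carried out in the proof of Proposition~\ref{ex_power}. The only cosmetic differences are that the paper absorbs the sign of $1/(\alpha-1)$ by rewriting in terms of $u_{|\alpha|,1}$ or $u_{1,|\alpha|}$ with a positive prefactor $1/|1-\alpha|$, and it handles $\alpha=1$ by a direct computation of $\frac{d^2}{dx^2}\log S_1(1,e^x)$ rather than your limiting argument; both treatments are valid.
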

\begin{proof}
By simple calculation, we have 
$${{d^2}\over{dx^2}}\log S_1 (1,e^x) ={{d^2}\over{dx^2}}\left( {{xe^x}\over {e^x-1}}
\right) \ge 0.$$
Thus $S_1(1,t)\in GCV$. 

We next consider the case, where $\alpha\not=1$. 
By Proposition \ref{ex_power}, 
$${{d^2}\over{dx^2}}\log u_{a,b}(e^x) \le 0 \quad (resp.\ \ge 0) 
\iff |a| \le |b|\quad (resp.\ |a|\ge |b|).$$ Thus 
$${{d^2}\over{dx^2}}\log S_\alpha (1,e^x)=
{1\over {|1-\alpha |}}{{d^2}\over{dx^2}}\log u_{|\alpha |, 1}(e^x) \ge 0 \quad (1<\alpha \le 2) ,$$
 $${{d^2}\over{dx^2}}\log S_\alpha (1,e^x)=
{1\over {|1-\alpha |}} {{d^2}\over{dx^2}}\log u_{1,|\alpha |}(e^x) \ge 0\quad (-1\le \alpha <1) $$
and 
$${{d^2}\over{dx^2}}\log S_\alpha (1,e^x)=
{1\over {|1-\alpha |}} {{d^2}\over{dx^2}}\log u_{1,|\alpha |}(e^x) \le 0\quad (-2\le \alpha \le -1) .
$$
\end{proof}

\subsection{Inverses}
In \cite{Ando}, Ando proves that for every $f\in OM_+^1$, 
the function $t\mapsto tf(t)$ has the inverse function $(tf)^{(-1)}$ which is in $OM_+^1$, i.e.,
$$f\in OM_+^1 \Rightarrow (t f)^{(-1)}\in OM_+^1.$$
In this section, we investigate this result with respect to the theory 
of geometrically convex functions. 

Let $P$ be the set of nonnegative operator monotone functions on $[0,\infty)$ and 
$$P^{-1}:=\{h\in P~|~ h([0,\infty))=[0,\infty),\ \ h^{(-1)}\in P\}.$$
In \cite{Uchiyama2}, Uchiyama proves the product formula
\begin{equation}\label{product formula}
P\cdot P^{-1}\subset P^{-1}.
\end{equation}
Using this, Ando's result stated above can be extended as 
$$f\in OM_+^1 \Rightarrow (t^\alpha f)^{(-1)}\in OM_+^1\quad (\alpha\ge 1).$$

The following is immediate from the above argument.
\begin{proposition}\label{inverse1}
Let $\alpha\ge 1$ and $f\in OM_+^1$. Then 
$$f\in GCV\iff (t^\alpha f)^{(-1)}\in GCC.$$ 
\end{proposition}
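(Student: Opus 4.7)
The plan is to reduce the equivalence to Lemma \ref{basic}(3) applied to the product $g(t) := t^\alpha f(t)$, exploiting the extended Uchiyama statement $(t^\alpha f)^{(-1)} \in OM_+^1$ recorded just before the proposition. The key preliminary identity is
$$\log g(e^t) = \alpha t + \log f(e^t),$$
which shows that $g \in gcv$ if and only if $f \in gcv$, since adding a linear function preserves convexity. I would also note that $g$ is a continuous, strictly increasing bijection of $[0,\infty)$ onto itself, so $g^{(-1)}$ is unambiguously defined on $[0,\infty)$.

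For the forward direction, assume $f \in GCV$. The observation above gives $g \in gcv$, and bijectivity lets me apply Lemma \ref{basic}(3) to conclude $g^{(-1)} \in gcc$; combined with $(t^\alpha f)^{(-1)} \in OM_+^1$ coming from (\ref{product formula}), this yields $(t^\alpha f)^{(-1)} \in GCC$. For the reverse direction, I would invoke the symmetric statement of Lemma \ref{basic}(3)---the inverse of a bijective function in $gcc$ lies in $gcv$---which follows by exactly the same substitution as in the proof given there, only with the inequality reversed. Applying this to $(t^\alpha f)^{(-1)} \in GCC$ returns $g \in gcv$, and then the factorization $f = t^{-\alpha} g$, together with the same linear-shift observation, gives $f \in gcv$; combined with the hypothesis $f \in OM_+^1$, this yields $f \in GCV$.

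There is no real obstacle. The operator monotonicity of $(t^\alpha f)^{(-1)}$ is handed to us by Uchiyama's product formula, the logarithmic characterization reduces every geometric-convexity check to a linear shift, and Lemma \ref{basic}(3) together with its symmetric counterpart closes the argument. The only point that warrants an explicit sentence is the unstated $gcc \to gcv$ direction of the inversion lemma, which I would either state as a short remark or simply fold into the proof by redoing the substitution.
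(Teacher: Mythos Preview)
Your proposal is correct and follows essentially the same approach as the paper: both arguments use the identity $\log(t^\alpha f)(e^t)=\alpha t+\log f(e^t)$ to transfer geometric convexity between $f$ and $t^\alpha f$ (the paper phrases this via the second derivative, you via a linear shift), then invoke Lemma~\ref{basic}(3) and its symmetric counterpart for the inversion, and appeal to the product formula~(\ref{product formula}) for operator monotonicity of the inverse. Your explicit mention of the $gcc\to gcv$ direction of the inversion lemma is a fair point, since the paper uses it tacitly in the converse step.
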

\begin{proof}
Assume $f\in GCV$. Then it is evident that 
\begin{equation*}
{{d^2}\over {dt^2}}\log (t^\alpha f)( e^t )= {{d^2}\over {dt^2}}\log f(e^t) \ge 0,
\end{equation*}
which implies that $t^\alpha f \in gcv$ and $(t^\alpha f)^{(-1)} \in gcc$. 
The operator monotonicity of $(t^\alpha f)^{(-1)}$ comes from (\ref{product formula}). 

Conversely, if $(t^\alpha f)^{(-1)}\in GCC$, 
$(t^\alpha f)$ is in $gcv$. Thus 
$ {{d^2}\over {dt^2}}\log f(e^t) ={{d^2}\over {dt^2}}\log (t^\alpha f)( e^t )\ge 0$. 
\end{proof}
From $\left( (t^\alpha f)^{(-1)}\right)^*=(t^\alpha f^*)^{(-1)}\text{ and } GCC^*=GCV$, 
the preceding proposition can be rewritten as follows: 
\begin{corollary}
Let $\alpha\ge 1$ and $f\in OM_+^1$. Then 
$$f\in GCC\iff (t^\alpha f)^{(-1)}\in GCV.$$ 
\end{corollary}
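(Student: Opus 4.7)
The plan is to deduce this corollary directly from Proposition \ref{inverse1} by dualizing, so no new analytic content is needed; the whole argument is a bookkeeping exercise with the involution $f \mapsto f^*$ on $OM_+^1$.

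First, I would apply Proposition \ref{inverse1} with $f$ replaced by $f^*$. Since $f \in OM_+^1$ implies $f^* \in OM_+^1$, this gives
\[
f^*\in GCV \iff (t^\alpha f^*)^{(-1)}\in GCC.
\]
Now I would translate both sides via the two identities stated just before the corollary. On the left, $GCC^* = GCV$ means that the involution swaps the two classes, so $f^*\in GCV$ is equivalent to $f \in GCC$. On the right, the identity $(t^\alpha f^*)^{(-1)} = ((t^\alpha f)^{(-1)})^*$ combined with $GCC = GCV^*$ turns $(t^\alpha f^*)^{(-1)}\in GCC$ into $(t^\alpha f)^{(-1)}\in GCV$. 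Chaining these equivalences yields exactly $f\in GCC \iff (t^\alpha f)^{(-1)}\in GCV$.

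The only step that is not pure symbol-pushing is the identity $(t^\alpha f^*)^{(-1)} = ((t^\alpha f)^{(-1)})^*$, so I would verify it carefully. Setting $h(t) := t^\alpha f(t)$ and $g(t) := t^\alpha f^*(t) = t^\alpha/f(1/t)$, for any $t>0$ let $s := h^{(-1)}(1/t)$, so that $s^\alpha f(s) = 1/t$. A direct substitution gives
\[
g(1/s) = (1/s)^\alpha\, f^*(1/s) = \frac{1}{s^\alpha f(s)} = t,
\]
so $g^{(-1)}(t) = 1/s = 1/h^{(-1)}(1/t) = (h^{(-1)})^*(t)$, which is the claimed identity.

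The main obstacle, such as it is, is making sure the involution $*$ is actually defined on the inverse in question; this requires $(t^\alpha f)^{(-1)} \in OM_+^1$, which was already established in the proof of Proposition \ref{inverse1} using Uchiyama's product formula (\ref{product formula}). Once that is in hand, the corollary is immediate.
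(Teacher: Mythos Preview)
Your proposal is correct and follows exactly the route the paper takes: dualize Proposition \ref{inverse1} via the involution $f\mapsto f^*$, using $GCC^*=GCV$ and the identity $\bigl((t^\alpha f)^{(-1)}\bigr)^*=(t^\alpha f^*)^{(-1)}$. You additionally supply a clean verification of that identity, which the paper merely asserts.
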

\hfill\break

We next consider a function $u(t)$ defined by 
 \begin{equation}\label{polynomial}
u(t):= \beta \prod_{i=1}^n (t+a_i)^{\gamma_i}, 
\end{equation}
where $0=a_1<a_2<\cdots a_n$, $1\le \gamma_1$, $0< \gamma_i$ and $0< \beta$. 
Uchiyama shows that $u$ is in $P^{-1}$ and this result can be derived using the above product formula (\cite{Uchiyama},\cite{Uchiyama2}).
Additionally, we show the following.
\begin{proposition}
If $f\in GCV$ and $u(1)=1$, then $(u\cdot f)^{(-1)} \in GCC$.
\end{proposition}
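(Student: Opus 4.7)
The plan is to follow the same template as the proof of Proposition \ref{inverse1}: first establish that $u\cdot f$ lies in $gcv$, then use Uchiyama's product formula to guarantee that $(u\cdot f)^{(-1)}$ is operator monotone, and finally combine these via Lemma \ref{basic}(3) and the normalization $u(1)=1$ to conclude $(u\cdot f)^{(-1)}\in GCC$.

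For the first step, I would show that the polynomial $u$ itself is in $gcv$ by a direct computation of $\frac{d^2}{dx^2}\log u(e^x)$. Since
\[
\log u(e^x)=\log\beta+\sum_{i=1}^n \gamma_i\log(e^x+a_i),
\]
and $\frac{d^2}{dx^2}\log(e^x+a_i)=\frac{a_i e^x}{(e^x+a_i)^2}\ge 0$ for each $a_i\ge 0$ (the case $a_1=0$ contributing $0$), every summand is convex, so $\log u(e^x)$ is convex and hence $u\in gcv$. Since $f\in GCV\subseteq gcv$ by hypothesis, Lemma \ref{basic}(1) immediately gives $u\cdot f\in gcv$.

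For the second step, I would invoke Uchiyama's result that the function $u$ of the form (\ref{polynomial}) lies in $P^{-1}$, together with $f\in OM_+^1\subseteq P$. The product formula (\ref{product formula}) then yields $u\cdot f\in P^{-1}$, so $(u\cdot f)^{(-1)}\in P$ is operator monotone on $[0,\infty)$, and $u\cdot f$ is a bijection of $[0,\infty)$ onto itself. The normalization is clean: $(u\cdot f)(1)=u(1)f(1)=1$, so $(u\cdot f)^{(-1)}(1)=1$, placing $(u\cdot f)^{(-1)}$ in $OM_+^1$.

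Finally, since $u\cdot f\in gcv$ is bijective, Lemma \ref{basic}(3) puts $(u\cdot f)^{(-1)}\in gcc$, and combined with the operator monotonicity already established we obtain $(u\cdot f)^{(-1)}\in GCC$. There is no real obstacle here; the only computational content is the explicit verification that $\log u(e^x)$ is convex, and every other ingredient (product formula, Lemma \ref{basic}, the form of $GCC$) is already available from the preceding sections.
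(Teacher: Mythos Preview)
Your proof is correct and follows essentially the same route as the paper: show $u\cdot f\in gcv$ via Lemma~\ref{basic}, deduce $(u\cdot f)^{(-1)}\in gcc$ from Lemma~\ref{basic}(3), and obtain operator monotonicity from the product formula~(\ref{product formula}). The paper's version is terser---it simply asserts ``From Lemma~\ref{basic}, we have $(u\cdot f)\in gcv$'' without verifying that $u$ itself is in $gcv$---whereas you make this step explicit with the computation $\tfrac{d^2}{dx^2}\log(e^x+a_i)=a_ie^x/(e^x+a_i)^2\ge 0$, and you also spell out the normalization $(u\cdot f)(1)=1$ needed to land in $OM_+^1$.
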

\begin{proof}
From Lemma \ref{basic}, we have $(u\cdot f) \in gcv$ and 
$(u\cdot f)^{(-1)} \in gcc$.
The operator monotonicity of $(u\cdot f)^{(-1)}$
comes from (\ref{product formula}).
\end{proof}
As the constant function $1$ is in $GCV$, the following is evident: 
\begin{corollary} 
If $u(1)=1$, then $u^{(-1)}\in GCC$.
\end{corollary}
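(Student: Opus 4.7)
The plan is to apply the preceding proposition with a trivial choice of $f$, so the argument collapses to checking that the constant function $1$ fits the hypothesis. First I would verify that the constant function $f\equiv 1$ lies in $GCV$: it is certainly in $OM_+^1$ (operator monotone, positive, normalized), and the defining inequality $f(\sqrt{xy})\le \sqrt{f(x)f(y)}$ reduces to $1\le 1$, so $f\in GCV$.

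Next, I would invoke the preceding proposition on the pair $(u,f)$, noting that the hypothesis $u(1)=1$ is exactly what is assumed in the corollary. The conclusion gives $(u\cdot f)^{(-1)}\in GCC$. Since $f\equiv 1$, we have $u\cdot f = u$, hence $u^{(-1)}\in GCC$.

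There is no real obstacle here; the corollary is a direct specialization. The only thing worth double-checking is that the proposition's use of the product formula $P\cdot P^{-1}\subset P^{-1}$ is not disturbed when $f=1$, but since $1\in P$ and $u\in P^{-1}$, the inclusion gives $u\cdot 1 = u\in P^{-1}$, so $u^{(-1)}$ is indeed operator monotone, confirming that the membership in $GCC$ (rather than merely $gcc$) is justified.
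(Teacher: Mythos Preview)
Your proposal is correct and matches the paper's own argument exactly: the paper simply notes that the constant function $1$ is in $GCV$ and applies the preceding proposition with $f\equiv 1$. Your additional remark about the product formula is a harmless sanity check, but not needed beyond what the proposition already guarantees.
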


{
\example
For $\alpha\in (0,1)$, a function 
$u(t):=t(1-\alpha+ \alpha t)$ has the inverse  
$u^{(-1)}(s)={{\alpha-1 +\sqrt{(1-\alpha)^2+4s\alpha }}\over {2\alpha}}$  
and $u^{(-1)}$ is in $GCC$.
}

\section{Main results}
In this section, we present an integral representation of an element of  $GCV$. 
In \cite{hansen}, Hansen considers a class of real valued continuous functions defined as
$${\mathcal E}:=\{   
F~|~ F:{\mathbb R}\rightarrow {\mathbb R}\text{ is continuous and } 
e^A \le e^B \Rightarrow e^{F(A)} \le e^{F(B)} 
\},$$
and proves the following :
\begin{itemize}
\item[(1)] 
{\color{black}A function $F:{\mathbb R}\rightarrow {\mathbb R}$ is in ${\mathcal E}$}
 if and only if
there exists $\beta\in {\mathbb R}$ and a measurable function,
$h : (-\infty ,0]\rightarrow [0,1]$ such that
$$F(x)=\beta+\int_{-\infty}^0 \left( {1\over {\lambda -e^x}}- {\lambda\over {\lambda^2+1}}
\right) h(\lambda) d\lambda,$$
where $d\lambda$ is the Lebesgue measure on $(-\infty,0]$; 
\item[(2)] the preceding measurable function $h$ is uniquely determined by $F$;
\item[(3)] the function $F \mapsto\exp F (\log t)$ is a bijection from ${\mathcal E}$ onto 
$P$.
\end{itemize}
From this result, for $f\in OM_+^1$, 
 $F(x)\left(:=\log f (e^x)\right)$ can be expressed as 
$$F(x)=\beta+\int_{-\infty}^0 \left( {1\over {\lambda -e^x}}- {\lambda\over {\lambda^2+1}}
\right) h(\lambda) d\lambda.$$
As $F(0)=0$, 
$$\beta=\int_{-\infty}^0 \left( {-1\over {\lambda -1}}+ {\lambda\over {\lambda^2+1}}
\right) h(\lambda) d\lambda.$$
Thus 
$$f(t)=\exp \int_{-\infty}^0 \left( {1\over {\lambda -t}}- {1 \over {\lambda-1}}
\right) h(\lambda) d\lambda. $$
Using this, we obtain the following : 
\begin{proposition}\label{GCV_int}
Let $f\in OM_+^1$ and let $h$ be a measurable function determined using the above method. 
 Then $f\in PMI$ if and only if
\begin{equation}\label{PMI_char}
\int_{-\infty}^0 
\left( {1\over {\lambda -t^r}}
-
 {r\over {\lambda -t}}+ {{r-1} \over {\lambda-1}}
\right) h(\lambda) d\lambda
\ge 0
\end{equation}
for all $t>0$ and $r\ge 1$. 
Moreover, $f\in GCV$ if and only if 
\begin{equation}\label{GCV_char}
\int_{-\infty}^0 
\left( 
{{\lambda +t}\over {(\lambda -t)^3}}
\right) h(\lambda) d\lambda \ge 0
\end{equation}
for all $t>0$.
\end{proposition}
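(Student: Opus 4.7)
The plan is to start from Hansen's integral representation of $\log f$ (derived in the passage immediately preceding the proposition) and substitute it into the logarithmic form of each condition, so that the two characterizations reduce to direct algebraic or calculus manipulations.

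For the pmi part, I would rewrite the defining inequality (\ref{pmi}) in logarithmic form as $\log f(t^r) - r \log f(t) \ge 0$ and simply plug in the representation. Linearity of the integral and the identity
$$-\frac{1}{\lambda - 1} + \frac{r}{\lambda - 1} = \frac{r - 1}{\lambda - 1}$$
collapse the boundary terms into the last summand of (\ref{PMI_char}), so the integrand in (\ref{PMI_char}) equals $\log f(t^r) - r\log f(t)$ pointwise in $(t,r)$. The ``if and only if'' is then immediate, with no need to invoke the uniqueness part of Hansen's theorem.

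For the gcv part, I would use the equivalence recalled in Section 3.1: $f\in GCV$ iff $F(t):=\log f(e^t)$ is convex on $\mathbb{R}$. Since $f$ is operator monotone and strictly positive on $(0,\infty)$, $F$ is $C^\infty$, so convexity is equivalent to $F''(t)\ge 0$ for all $t\in\mathbb{R}$. I would then differentiate twice under the integral sign; the only $t$-dependence is through the kernel $1/(\lambda - e^t)$, whose second derivative is
$$\frac{d^2}{dt^2}\frac{1}{\lambda - e^t} = \frac{e^t(\lambda + e^t)}{(\lambda - e^t)^3}.$$
Factoring out the positive $e^t$ and writing $s=e^t>0$ then yields (\ref{GCV_char}).

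The only real subtlety is justifying differentiation under the integral sign. Since $0\le h\le 1$ and $\lambda\le 0$, for $t$ in a compact subset of $\mathbb{R}$ the kernel and its first two $t$-derivatives are bounded by a constant multiple of $1/|\lambda|^2$ as $\lambda\to-\infty$ and remain bounded near $\lambda=0$ (where $\lambda - e^t$ stays away from $0$). This gives an integrable dominating function, so the standard Leibniz rule applies and both characterizations follow cleanly.
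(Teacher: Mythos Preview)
Your proposal is correct and follows exactly the same approach as the paper: substitute Hansen's representation into $F(rx)-rF(x)$ for the $PMI$ part, and differentiate twice under the integral sign (via dominated convergence) for the $GCV$ part, then set $s=e^t$. The only slip is verbal---you write that ``the integrand in (\ref{PMI_char}) equals $\log f(t^r)-r\log f(t)$'' when you mean the \emph{integral}; otherwise your argument matches the paper's, with a bit more detail supplied on the dominating function.
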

\begin{proof}
From the above argument, 
$$F(x)=\log f(e^x)=\int_{-\infty}^0 \left( {1\over {\lambda -e^x}}- {1 \over {\lambda-1}}
\right) h(\lambda) d\lambda.$$ 
Hence, the condition {\color{black}$f(e^{rx})/f(e^x)^r \ge 1$} can be expressed as
$$0\le F(rx)-rF(x)= \int_{-\infty}^0 
\left( {1\over {\lambda -e^{rx}}}
-
 {r\over {\lambda -e^x}}+ {{r-1} \over {\lambda-1}}
\right) h(\lambda) d\lambda
$$ for all $x\in {\mathbb R}$ and $r\ge 1$.

From Lebesgue's dominated convergence theorem, 
the condition $${{d^2}\over {dx^2}}\log f(e^x) \ge 0$$ can be expressed as 
\begin{align*}
F''(x)&=\int_{-\infty}^0 
{{d^2}\over {dx^2}}
\left( {1\over {\lambda -e^{x}}}- {1 \over {\lambda-1}}
\right) h(\lambda) d\lambda \\
&=
\int_{-\infty}^0 
\left( 
{{e^x \lambda + e^{2x}}\over {(\lambda -e^x)^3}}
\right) h(\lambda) d\lambda \ge 0, 
\end{align*}
which implies the desired result. 
\end{proof}
{\Remark
Let $0< a<\infty$. Considering $h=I_{(-\infty,-a)} \ (resp.\ h=I_{(-a,0)})$, 
we have
$$f(t)={{a+t}\over {a+1}}\in GCV\quad (resp.\ \ 
f(t)={{(a+1)t}\over {a+t}}\in GCC
).$$
}{
\Remark Let $\alpha \in [0,1]$. Considering $ h=\alpha I_{(-\infty,0)}$, 
$$f(t)=t^\alpha \in GCV\cap GCC . $$
}

\subsection{Conjecture and theorem}
It is conjectured that $GCV$ is a proper subset of $PMI$. 
To prove this, we use the argument of the preceding section.
We set $\alpha={9\over {14}}$ and  
$h(\lambda):=\alpha I_{(-\infty,-2)} +(1-\alpha)I_{(-1,0)}$ and 
show that inequality (\ref{GCV_char}) does not hold, but (\ref{PMI_char}) holds.  
\begin{theorem}\label{GCV and PMI}
$$GCV\subsetneq PMI.$$
\end{theorem}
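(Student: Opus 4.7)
My plan is to apply the integral characterization of Proposition~\ref{GCV_int} to the explicit step function
$$h(\lambda) := \alpha\, I_{(-\infty,-2)}(\lambda) + (1-\alpha)\, I_{(-1,0)}(\lambda), \qquad \alpha := \tfrac{9}{14},$$
singled out in the paragraph preceding the theorem. Additivity of $\log f(t)=\int_{-\infty}^0\bigl(1/(\lambda-t)-1/(\lambda-1)\bigr)h(\lambda)\,d\lambda$ in $h$, together with the two Remarks after Proposition~\ref{GCV_int}, yields the explicit formula
$$f(t) \;=\; \left(\tfrac{2+t}{3}\right)^{\alpha}\left(\tfrac{2t}{1+t}\right)^{1-\alpha},$$
a geometric interpolation of the basic $GCV$-function $g_1(t):=(2+t)/3$ and the basic $GCC$-function $g_2(t):=2t/(1+t)$.

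For $f\notin GCV$, I use the antiderivative $-1/(\lambda-t)-t/(\lambda-t)^2$ of $(\lambda+t)/(\lambda-t)^3$ to compute the integral in \eqref{GCV_char} in closed form:
$$\int_{-\infty}^{0}\frac{\lambda+t}{(\lambda-t)^3}\,h(\lambda)\,d\lambda \;=\; \frac{2\alpha}{(2+t)^2} - \frac{1-\alpha}{(1+t)^2}.$$
At $t=0$ this equals $\tfrac{3\alpha}{2}-1=-\tfrac{1}{28}<0$; by continuity \eqref{GCV_char} fails for small $t>0$ and hence $f\notin GCV$.

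For $f\in PMI$, set $A(t,r):=\log(g_1(t^r)/g_1(t)^r)\ge 0$ and $B(t,r):=\log(g_2(t^r)/g_2(t)^r)\le 0$ (using Proposition~\ref{key_lemma} and its $GCC$-dual). Taking logarithms in $f(t^r)\ge f(t)^r$ reduces PMI to the scalar inequality
$$\Phi(t,r):=9\,A(t,r)+5\,B(t,r)\;\ge\; 0\qquad (t>0,\ r\ge 1).$$
Since $\Phi(t,1)\equiv 0$, my strategy is to prove $\partial_r\Phi\ge 0$ on $[1,\infty)$. Direct differentiation gives
$$\partial_r\Phi(t,r)=C(t)+(\log t)\,\psi(t^r),\quad C(t):=9\log\tfrac{3}{2+t}+5\log\tfrac{1+t}{2},\quad \psi(s):=\frac{s(4s-1)}{(2+s)(1+s)},$$
and an elementary analysis of $\psi$ (strictly increasing on $[(-8+3\sqrt{10})/13,\infty)$, negative on $(0,1/4)$) and of $C$ (strictly decreasing on $[1/4,\infty)$ with $C(1)=0$ and $C>0$ on $(0,1)$) shows after a short case split that $\min_{r\ge 1}\partial_r\Phi(t,r)=D(t):=C(t)+(\log t)\psi(t)$ for $t\ge 1/4$, and $\min_{r\ge 1}\partial_r\Phi(t,r)\ge C(t)>0$ for $0<t\le 1/4$. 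It therefore suffices to prove $D(t)\ge 0$ on $[1/4,\infty)$.

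The main obstacle is this last one-variable inequality. Taylor expansion around $t=1$ gives $D(t)=\tfrac{3}{8}(t-1)^2+O((t-1)^3)$, so $D$ has a double zero at $t=1$; the coefficient $\alpha=9/14$ is calibrated precisely so that $D$ vanishes there without a sign change, and any smaller $\alpha$ would break PMI. Combined with $D(1/4)>0$ and $\lim_{t\to\infty}D(t)=9\log 3-5\log 2>0$, a monotonicity analysis of $D'$---which after factoring $\log t$ reduces to an elementary rational-function bound---completes the proof that $D\ge 0$ on $[1/4,\infty)$. This, together with the failure of \eqref{GCV_char} above, exhibits $f\in PMI\setminus GCV$ and thus proves $GCV\subsetneq PMI$.
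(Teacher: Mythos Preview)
Your argument is correct and uses the same test function as the paper, but the two proofs handle the PMI inequality $\Phi(t,r)\ge 0$ from opposite directions. The paper fixes $r>1$ and differentiates in $t$: writing $\varphi(t)=\Phi(t,r)/5$, it obtains the factorization
\[
\varphi'(t)=\frac{2rt\,(t^{r-1}-1)\,\psi_{\beta,r}(t)}{(t^r+2)(t^r+1)(t+2)(t+1)},\qquad \psi_{\beta,r}(t)=\tfrac{1}{10}\bigl(13t^{r+1}+8t^r+8t-2\bigr),
\]
and since $\psi_{\beta,r}$ is strictly increasing with a unique zero in $(0,1)$, one reads off directly that $\varphi$ attains its minimum on $\{0,1\}$, where $\varphi\ge 0$. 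You instead fix $t$ and differentiate in $r$, reducing to the one-variable inequality $D(t)\ge 0$ on $[1/4,\infty)$, and then differentiate again in $t$. Your final step, stated only in outline, goes through cleanly because of the cancellation $C'(t)+\psi(t)/t=0$, which yields
\[
D'(t)=(\log t)\,\psi'(t)=(\log t)\,\frac{13t^2+16t-2}{\bigl((2+t)(1+t)\bigr)^2};
\]
this is negative on $[1/4,1)$ and positive on $(1,\infty)$, so $D$ has its global minimum at $D(1)=0$. The paper's route needs only one differentiation and avoids your case split at $t=1/4$; your route has the virtue of separating out the $r$-dependence first and making visible why $\alpha=9/14$ is the critical choice producing a double zero at $t=1$. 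The $GCV$ failure is handled identically in both arguments.
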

\begin{proof}
Let us show that (\ref{PMI_char}) holds. For $r>1$, we have 
\begin{align*}
\int_{-\infty}^0 
&\left( {1\over {\lambda -t^r}}
-
 {r\over {\lambda -t}}+ {{r-1} \over {\lambda-1}}
\right) h(\lambda) d\lambda \\ 
&=
(1-\alpha) \left( {\alpha \over {1-\alpha}} 
\log{{3^{r-1}(t^r+2)}\over {(t+2)^r}}
-
\log{{2^{r-1}(t^r+1)}\over {(t+1)^r}}
\right) . 
\end{align*}
Let us set $\beta:= {\alpha \over {1-\alpha}}$ and
$$\varphi (t):=\beta\log{{3^{r-1}(t^r+2)}\over {(t+2)^r}}
-
\log{{2^{r-1}(t^r+1)}\over {(t+1)^r}}. 
$$
Then 
$${{d\varphi}\over {dt}}={
{2\,r\,t\,(t^{r-1}-1)
\psi_{\beta,r}(t)}
\over
{(t^r+2)(t^r+1)(t+2)(t+1)}}
,
$$
where 
$
\psi_{\beta,r}(t)=
{\color{black}\left\{ 
\left(\beta-{{1}\over{2}}\right)\,t^{r+1}+\left(\beta-1
\right)\,\left(t^{r}+t\right)-(2-\beta)
\right\}}
$.
Here, $\psi_{\beta,r}$ is strictly monotone increasing  
and equation $\psi_{\beta,r}(t)=0$ has a unique solution in 
$(0,1)$. Thus 
$$\min_{t\ge 0} \varphi(t) = \min\{ \varphi(0),\varphi(1)\}=\varphi(1)= 0.$$

We next show that inequality (\ref{GCV_char}) does not hold. By simple calculation, 
$$
\int_{-\infty}^0 
\left( 
{{\lambda+t}\over {(\lambda -t)^3}}
\right) h(\lambda) d\lambda 
=
(1-\alpha) {{2}\over{(t+2)^2}} \left(  
\beta- {{(t+2)^2} \over {2(t+1)^2}}
\right), 
$$
where $\beta= {\alpha \over {1-\alpha}}\left(={9\over 5}\right)$.  
This takes a negative value, if $t$ is sufficiently small. 
\end{proof}
\begin{corollary}\label{GCV and PMI}
$$GCC\subsetneq PMD.$$
\end{corollary}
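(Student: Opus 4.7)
The plan is to derive this corollary from Theorem~\ref{GCV and PMI} by passing to adjoints, exploiting the involution $f \mapsto f^*$ on $OM_+^1$. The paper has already recorded that $PMD = PMI^*$ and, in the earlier corollary to Proposition~\ref{key_lemma}, that $GCC = GCV^*$, so the forward inclusion $GCC \subseteq PMD$ is immediate: if $g \in GCC$, write $g = f^*$ with $f \in GCV \subseteq PMI$, and conclude $g = f^* \in PMI^* = PMD$.

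For the strict part, I would take the counterexample constructed in the proof of Theorem~\ref{GCV and PMI}, namely the $f \in OM_+^1$ whose Hansen weight is $h = \alpha I_{(-\infty,-2)} + (1-\alpha) I_{(-1,0)}$ with $\alpha = 9/14$, and show that its adjoint $f^*$ lies in $PMD \setminus GCC$. That $f^* \in PMD$ is automatic from $f \in PMI$ via $PMD = PMI^*$. To see $f^* \notin GCC$, the cleanest route is to verify the general equivalence $f^* \in GCC \iff f \in GCV$: starting from $f^*(\sqrt{xy}) \ge \sqrt{f^*(x)f^*(y)}$ and using $f^*(t) = 1/f(1/t)$, substitute $u = 1/x$, $v = 1/y$ and invert to recover $f(\sqrt{uv}) \le \sqrt{f(u)f(v)}$. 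Since Theorem~\ref{GCV and PMI} shows $f \notin GCV$, this gives $f^* \notin GCC$, completing the proof.

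The main obstacle, such as it is, will be the bookkeeping needed to confirm the two duality equivalences $PMI^* = PMD$ and $GCV^* = GCC$, both of which are already recorded in the paper, so there is little to do. No new integral-representation computation or new counterexample is required: the theorem's counterexample is reused through the adjoint. If one preferred a self-contained argument not invoking Hansen's representation for the strict part, the same logic would apply to any explicit $f \in PMI \setminus GCV$, so the corollary is truly a formal consequence of Theorem~\ref{GCV and PMI} together with the $*$-involution.
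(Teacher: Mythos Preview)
Your proposal is correct and matches the paper's intended argument: the corollary is stated without proof precisely because it follows immediately from Theorem~\ref{GCV and PMI} by applying the involution $f\mapsto f^*$, using the identifications $GCC=GCV^*$ and $PMD=PMI^*$ already recorded after Proposition~\ref{key_lemma}. Your extra verification of $f^*\in GCC\iff f\in GCV$ from the definition is fine but unnecessary, since the paper has already used $GCV^*=GCC$.
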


\section{Related results}
In this section, we consider some operator-mean classes containing 
$PMI$ and prove certain relationships among them. 
For $r> 1$, we define 
$${PMI}_r:=\{f\in OM_+^1~|~ f(t^r)\ge f(t)^r\}$$
and 
$$PMI_\infty := \{f\in OM_+^1~|~f(t)\ge t^\alpha \text{ for some } \alpha\in [0,1]\}.$$
 
{\color{black}We first note a property of $PMI_r$. 
It follows from \cite[Corollary 4.7]{HSW} that 
the Ando-Hiai type inequality, 
$$A,B>0,\quad A\sigma_f B \ge 1 \Rightarrow A^r \sigma_f B^r \ge 1$$
is a necessary and sufficient condition for $f\in OM_+^1$ to be in $PMI_r$. 
In addition,  from the proof of \cite[Lemma 2.1]{wada}, we have  
$$PMI = \bigcap_{x\in (1,2]}PMI_x\subset PMI_r.$$ 
}

We next consider the case, where $r=\infty$. 
Let $f\in OM_+^1$ such that $f(t)\ge t^\alpha$. 
Then,
$${{f(t)-f(1)}\over {t-1}}\ge {{t^\alpha-1}\over {t-1}}\quad (t>1)$$
and 
$${{f(t)-f(1)}\over {t-1}}\le {{t^\alpha-1}\over {t-1}}\quad (t<1),$$
which implies that  $f'(1)=\alpha$. 
Thus the definition of $PMI_\infty$ can be rewritten as follows: 
$$PMI_\infty=\{f\in OM_+^1~|~ f(t)\ge t^{f'(1)}\}.$$

As stated in \cite{Y}, the following relationship among 
$PMI_r$ and $PMI_\infty$ is known. 
\begin{proposition}(\cite{Y}) For $r>1$, 
$$PMI_r \subset PMI_\infty.$$
\end{proposition}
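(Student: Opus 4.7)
The plan is to pass to logarithms, converting the multiplicative inequality $f(t^r)\ge f(t)^r$ into an additive one-parameter scaling inequality, and then iterate that scaling backward toward the origin where the initial slope $f'(1)$ takes over.

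First I would set $g(s):=\log f(e^s)$. Then $g(0)=0$ and $g$ is smooth near $0$ (any $f\in OM_+^1$ is real analytic at $1$), with $g'(0)=f'(1)$. The hypothesis $f\in PMI_r$ becomes $g(rs)\ge r\,g(s)$ for all $s\in\mathbb{R}$. Substituting $s=r^{-1}u$ gives $g(r^{-1}u)\le r^{-1}g(u)$, and induction yields
$$g(r^{-n}s)\le r^{-n}\,g(s)\qquad(s\in\mathbb{R},\ n\ge 0).$$

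Next, for fixed $s>0$ I would divide by the positive quantity $r^{-n}s$ to obtain $g(r^{-n}s)/(r^{-n}s)\le g(s)/s$, let $n\to\infty$, and use $g(u)/u\to g'(0)=f'(1)$ as $u\to 0^+$ to conclude $f'(1)\le g(s)/s$, i.e., $g(s)\ge f'(1)\,s$. For $s<0$ the same iterated inequality, divided now by the negative quantity $r^{-n}s$, flips orientation; but the one-sided limit $u\to 0^-$ of $g(u)/u$ is still $f'(1)$, and after the flip one again obtains $g(s)\ge f'(1)\,s$. Exponentiating and substituting $t=e^s$ gives $f(t)\ge t^{f'(1)}$ for every $t>0$. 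Combined with the standard fact that $f'(1)\in[0,1]$ for $f\in OM_+^1$ (a direct consequence of the Löwner integral representation, already used in the paper via Hansen's form), this places $f$ in $PMI_\infty$.

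The only mildly delicate point, and thus the expected obstacle, is the two-sided limit at the origin: one must treat the cases $s>0$ and $s<0$ separately because the sign of $r^{-n}s$ controls the direction of the inequality after division, and one must invoke differentiability of $f$ at $1$ so that $f'(1)$ exists as a genuine two-sided limit of $g(u)/u$. Both ingredients are classical for normalized operator monotone functions, so once they are quoted the argument reduces to the short iteration above.
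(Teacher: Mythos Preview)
Your proof is correct and follows essentially the same route as the paper: iterate the $PMI_r$ inequality to obtain $f(t^{r^{-n}})^{r^n}\le f(t)$ (equivalently $g(r^{-n}s)\le r^{-n}g(s)$), then let $n\to\infty$ and use differentiability of $f$ at $1$ to identify the limit as $t^{f'(1)}$. The only cosmetic difference is that the paper works directly with $f(t^{s_n})^{1/s_n}$ and computes the limit $\exp\bigl(\lim_n s_n^{-1}\log f(t^{s_n})\bigr)=t^{f'(1)}$ in one stroke for all $t>0$, thereby avoiding your split into the cases $s>0$ and $s<0$.
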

\begin{proof}
Let $f\in PMI_r$. It is evident from the definition that
$$f(t^{s_n})^{1/{s_n}}\le f(t)$$
for $s_n:=1/r^n$. Thus 
$$\lim_{n\rightarrow \infty} \exp( \log (f(t^{s_n})^{1/{s_n}}))
=\exp(\lim_{n\rightarrow \infty} {{\log (f(t^{s_n}))} \over {s_n}})
=t^{f'(1)}\le f(t).
$$
\end{proof}
From the above discussion, the problem whether 
$PMI$ is a proper subset of $PMI_\infty$ arises (cf. \cite{Y}).
We provide an answer to this problem. 
\begin{proposition} 
$$\bigcup_{r>1}PMI_r \subsetneq PMI_\infty. $$
\end{proposition}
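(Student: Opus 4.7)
The plan is to exhibit an explicit $f \in OM_+^1$ that lies in $PMI_\infty$ but in no $PMI_r$ for $r > 1$. Write $F(x) = \log f(e^x)$, $\alpha = F'(0) = f'(1)$, and $G(x) := F(x) - \alpha x$. Then $f \in PMI_\infty$ is equivalent to $G \ge 0$ on $\mathbb{R}$, while $f \in PMI_r$ is equivalent to $G(rx) \ge r G(x)$ for all $x$. My aim is to construct $f$ for which $G \ge 0$ and, for each $r > 1$, some $x_r > 0$ gives $G(r x_r) < r G(x_r)$.

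A clean uniform mechanism for the latter is the following: if $G$ is bounded above on $(0, \infty)$ with $M := \sup_{x > 0} G(x) > 0$ and $G$ attains values arbitrarily close to $M$, then for every $r > 1$ one selects $x_r > 0$ with $G(x_r) > M/r$, whence $r G(x_r) > M \ge G(r x_r)$. This sidesteps the need to tailor the counterexample separately to each $r$. I would then produce a concrete such $f$ via Hansen's integral representation (Proposition \ref{GCV_int}), parametrized by a density $h:(-\infty,0] \to [0,1]$, trying first $h$ of the form $\alpha\, I_{(-\infty,-b)} + h_1$: the tail value $\alpha$ fixes the asymptotic growth exponent of $f$ at $\infty$ (so that $G$ is bounded on $(0, \infty)$), while a piecewise constant correction $h_1$ supported compactly in $(-b, 0]$ is tuned to enforce the identity $\alpha = f'(1) = \int_{-\infty}^0 h(\lambda)/(\lambda-1)^2\, d\lambda$. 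With such $h$ in hand, $f$ and $G$ are computable in closed form; verification of $G \ge 0$ reduces to a pointwise sign check of the integrand in the Hansen formula, and the failure of $PMI_r$ follows from the boundedness argument.

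The main obstacle is matching the two constraints $\alpha = f'(1)$ and $\alpha = $ asymptotic exponent of $f$ at $\infty$. Naive choices (for instance, convex combinations of distinct power functions, i.e., elements of $GM$) produce $f'(1)$ strictly smaller than the asymptotic exponent, so that $G$ is unbounded on $(0, \infty)$ and the mechanism above breaks. Designing the correction $h_1$ so as to neutralize this discrepancy while keeping $G$ nonnegative and strictly positive somewhere is where the work lies; once such an $h$ is in place, the membership $f \in PMI_\infty$ and the failure $f \notin PMI_r$ for all $r > 1$ are both routine calculations.
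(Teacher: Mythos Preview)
Your high–level mechanism is sound and, in fact, is exactly what drives the paper's example: if $G(x):=\log f(e^x)-\alpha x$ (with $\alpha=f'(1)$) is nonnegative, bounded, and has positive supremum $M$ on a half-line, then picking $x$ with $G(x)>M/r$ gives $G(rx)\le M<rG(x)$, so $f\notin PMI_r$. The paper does precisely this on the half-line $(-\infty,0)$: for
\[
f(t)=\frac{\tfrac13 t+\tfrac23 t^{1/3}}{\tfrac13+\tfrac23 t^{1/3}},
\]
one has $f'(1)=1/3$, $f(t)\sim 2t^{1/3}$ as $t\to 0$, hence $G(x)\to\log 2$ as $x\to-\infty$; this is why $\lim_{t\to 0}f(t^r)/f(t)^r=2^{1-r}<1$. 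The membership $f\in PMI_\infty$ is checked by one derivative computation showing $f(t)\ge t^{1/3}$.

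The gap in your proposal is that you never produce the example; you only list constraints on the Hansen density $h$ and assert the verification will be routine. It will not be a ``pointwise sign check of the integrand'': writing $G(x)=\int_{-\infty}^0\bigl(\tfrac{1}{\lambda-e^x}-\tfrac{1}{\lambda-1}\bigr)\bigl(h(\lambda)-\alpha\bigr)\,d\lambda$, the very constraint $F'(0)=\alpha$ (i.e.\ $\int (h-\alpha)(\lambda-1)^{-2}d\lambda=0$) forces $h-\alpha$ to change sign, so the integrand changes sign and $G\ge 0$ cannot be read off termwise. You would still have to write down a concrete $h$, compute $G$ in closed form, and analyse its sign---work comparable to (and messier than) the paper's direct choice of $f$. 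In short: right idea, but the construction is the whole proof, and it is missing; the paper sidesteps the Hansen machinery entirely by exhibiting an explicit $f$ and doing two lines of calculus.
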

\begin{proof}
We show that
$$f(t):={{(1/3)t+(2/3) t^{1/3}}\over {(1/3) + (2/3) t^{1/3}}}$$
is in $PMI_\infty\backslash PMI_r$ for all $r>1$. 

Let us show $f\in PMI_\infty$. 
As the operator monotonicity of $f$ comes from \cite{KNOW}, 
it is sufficient to show $f(t)\ge t^{f'(1)}$. Set $g(t):=f(t)-t^{1/3}$, then
$$g'(t)={{t^{1/3}\left(t^{1/3}-1\right)\,\left(t^{1/3}+1\right)\,\left(2\,t^{1/3}+1\right)\,\left(4\,t^{1/3}
 -1\right)}\over{24\,t^2+36\,t^{{{5}\over{3}}}+18\,t^{{{4
 }\over{3}}}+3\,t}}$$
and $g(0)=g(1)=0$, which implies that  $g(t)\ge 0$ and 
$f(t)\ge t^{1/3}$. 

In addition, from 
$$\lim_{t\rightarrow 0}{{f(t^r)}\over {f(t)^r}}=
\lim_{t\rightarrow 0}
{{((1/3)t^r+(2/3) t^{r/3})}\over {((1/3) + (2/3) t^{r/3})}}
{{((1/3) + (2/3) t^{1/3})^r}\over{((1/3)t+(2/3) t^{1/3})^r} }=2^{1-r} <1 , 
$$
we have $f\not\in PMI_r$.
\end{proof}
Combining all the results stated above, we obtain the following:
\begin{corollary}
Let $\sigma$ be an operator mean and $f_\sigma$ be the representation function of $\sigma$.  
Consider the statements:

(I) $f_\sigma$ is geometrically convex. 

(II) $A,B>0, A\sigma B \ge I \Rightarrow A^r\sigma B^r \ge I \text{ for all } r>1.$

(III) $A,B>0, A\sigma B \ge I \Rightarrow A^r\sigma B^r \ge I \text{ for some } r>1.$

(IV) $\sigma\ge \#_\alpha$ for some $\alpha\in [0,1]$. 
\hfill\break
Then 

(1) I implies II; II implies III; III implies IV, 

(2) IV does not imply III; II does not imply I.
\end{corollary}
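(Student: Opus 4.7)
My strategy is to translate each of the four statements into an equivalent condition on $f_\sigma$ and then to assemble the assertions from results already proved in the excerpt; no new ingredient is needed. The translations are: (I) is $f_\sigma\in GCV$ by definition; (II) is $f_\sigma\in\bigcap_{r>1}PMI_r=PMI$, where the equality with an intersection uses the Ando--Hiai characterization of $PMI_r$ from \cite[Corollary 4.7]{HSW} and the identity $PMI=\bigcap_{x\in(1,2]}PMI_x$ recalled in Section~5; (III) is $f_\sigma\in\bigcup_{r>1}PMI_r$ by the same characterization; and (IV) is $f_\sigma\in PMI_\infty$ after using the Section~5 observation that any $\alpha$ appearing in the definition of $PMI_\infty$ is forced to equal $f_\sigma'(1)$.

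Granting those translations, the positive implications go as follows. I$\Rightarrow$II is exactly Proposition~\ref{key_lemma}. II$\Rightarrow$III is the trivial set-theoretic inclusion $\bigcap_{r>1}PMI_r\subseteq PMI_s$ for any fixed $s>1$. III$\Rightarrow$IV follows by fixing some $r>1$ with $f_\sigma\in PMI_r$ and invoking the proposition attributed to \cite{Y} proved just before this corollary, which gives $PMI_r\subset PMI_\infty$ and hence $\sigma\ge\#_{f'_\sigma(1)}$.

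For the two negative assertions I would simply quote the counterexamples already on hand. The function $f(t)=\frac{(1/3)t+(2/3)t^{1/3}}{(1/3)+(2/3)t^{1/3}}$ from the preceding proposition is shown there to lie in $PMI_\infty\setminus\bigcup_{r>1}PMI_r$, and it therefore witnesses IV$\not\Rightarrow$III. For II$\not\Rightarrow$I, I would cite the main theorem $GCV\subsetneq PMI$, whose proof exhibits a concrete element of $PMI\setminus GCV$ via Hansen's integral representation with $h=\frac{9}{14}I_{(-\infty,-2)}+\frac{5}{14}I_{(-1,0)}$.

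Main obstacle: essentially none---this corollary is a repackaging of earlier material. The only delicate point worth flagging is the translation step for (IV), where the existential ``for some $\alpha\in[0,1]$'' has to be upgraded to the specific equality $\alpha=f_\sigma'(1)$ in order to identify statement (IV) with membership in $PMI_\infty$ as defined in Section~5; but this reduction is precisely the short derivative computation given there, so once the four translations are named the rest of the argument is mechanical.
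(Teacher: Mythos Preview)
Your proposal is correct and is essentially what the paper does: the paper offers no explicit proof beyond the phrase ``Combining all the results stated above,'' and your write-up is precisely that combination made explicit, with the right translations of (I)--(IV) and the right citations (Proposition~\ref{key_lemma} for I$\Rightarrow$II, the $PMI_r\subset PMI_\infty$ proposition for III$\Rightarrow$IV, the $PMI_\infty\setminus\bigcup_{r>1}PMI_r$ example for IV$\not\Rightarrow$III, and Theorem~\ref{GCV and PMI} for II$\not\Rightarrow$I). One minor remark: for the translation of (II) you could also cite directly the equivalence recalled in the Introduction (from \cite{wada}) that the Ando--Hiai type inequality for all $p\ge 1$ is equivalent to $f_\sigma\in PMI$, which slightly shortens the detour through $\bigcap_{r>1}PMI_r$.
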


\hfill\break

\hfill\break
Thus a problem 
arises.

\begin{problem}
Let $r>1$. Then,
$PMI= PMI_r ?$
\end{problem}

\hfill\break

\section*{Acknowledgement}
We gratefully acknowledge the helpful discussions with F. Hiai on several points of this paper.


\begin{thebibliography}{10}


\bibitem{Ando} 
 T. Ando ,
{\it Comparison of norms $|||f(A)-f(B)|||$ and $|||f(|A-B|)|||$ }, 
Math. Z. 197 (1988), 403--409.

\bibitem{A-H} 
T. Ando and F. Hiai ,
{\it Log majorization and complementary Golden-Thompson type inequality}, 
Linear Algebra Appl. 197 (1994), 113--131.


\bibitem{B-H} 
J.-C.Bourin and F. Hiai,
{\it
Jensen and Minkowski inequalities for operator means and 
anti--norms
}, 
Linear Algebra Appl. 456 (2014), 22--53. 

\bibitem{Fujii-Yamazaki}
J.I. Fujii and T. Yamazaki, 
{\it
Power monotonicity for a path of operator means
}, 
submitted to Scientiae Mathematicae Japonicae. 


\bibitem{hansen} 
F.Hansen, 
{\it 
Selfadjoint means and operator monotone functions
}, 
Math. Ann. 256 (1981), no.1, 29--35.


\bibitem{HSW} F. Hiai, Y. Seo and S. Wada
{\it Ando--Hiai type inequalities for multivariate
operator means},
to appear in Linear and Multilinear Algebra


\bibitem{K-A} F. Kubo and T. Ando, 
{\it Means of positive linear operators},
 Math. Ann. ~246 (1980), 205--224. 


\bibitem{KNOW} 
F. Kubo, N. Nakamura, K. Ohno and S. Wada, 
{\it Barbour path of operator monotone functions}, 
Far East J.~Math.~Sci.~ (FJMS) 57 (2)(2011), 181--192.

\bibitem{nagisa-wada}
M.Nagisa and  S. Wada
{\it 
Operator monotonicity of some functions
}, 
Linear Algebra Appl. 486 (2015), 389--408. 


\bibitem{Naka} 
Y. Nakamura,
{\it Classes of operator monotone functions and Stieltjes functions, } 
in : H. Dym, et al. (Eds.), The Gohberg Anniversary Collection, vol. II, 
Oper. Theory Adv. Appl., vol. 41, Birkh\"auser, 1989, pp. 395--404.




\bibitem{Niclescu}
C.P. Niculescu and L-E. Persson, {\it 
Convex functions and their applications. A contemporary approach }, 
CMS Books in Mathematics/Ouvrages de Math\'ematiques de la SMC. Springer, Cham, 2018

\bibitem{Uchiyama}
M. Uchiyama, {\it 
Operator monotone functions which are defined implicitly and operator inequalities
}, 
J. Funct. Anal. 175 (2000), no. 2, 330--347.

\bibitem{Uchiyama2}
M. Uchiyama, {\it 
A new majorization between functions, 
polynomials, and operator inequalities}, 
J. Funct. Anal. 231 (2006), 221--244.


\bibitem{wada} S. Wada, 
{\it Some ways of constructing Furuta-type inequalities
}, 
 Linear Algebra Appl. 457 (2014), 276--286.




\bibitem{Y} T.Yamazaki, 
{\it 
An integral representation of operator means via the power means and an application to the Ando-Hiai inequality}, 
arXiv:1803.04630.


\end{thebibliography}
\end{document}